\newcommand{\keywords}[1]{\par\addvspace\baselineskip
\noindent\keywordname\enspace\ignorespaces#1}
\begin{document}

\emergencystretch 3em

\mainmatter  % start of an individual contribution

% first the title is needed
\title{Knots, Music and DNA}

% a short form should be given in case it is too long for the running head
\titlerunning{Knots, Music and DNA}

% the name(s) of the author(s) follow(s) next
%
% NB: Chinese authors should write their first names(s) in front of
% their surnames. This ensures that the names appear correctly in
% the running heads and the author index.
%
\author{Maria Mannone\thanks{I would like to thank the mathematician and musician Franck Jedrzejewski for his important suggestion to study the implications of knot theory for the mathematical description of musical gestures. I also would like to thank the mathematicians and musicians Emmanuel Amiot and Tom Collins, the composer Dimitri Papageorgiou, the mathematicians Anar Akhmedov and Ian Whitehead for the interesting feedback on the manuscript, and the mathematician Giuseppe Metere for the insightful discussions about 2-categories.}}
%
% if the names of the authors are too long for the running head, please use the format: AuthorA et al.
%\authorrunning{AuthorA and AuthorB (or AuthorA et al. if too long)}

% the affiliations are given next; don't give your e-mail address
% unless you accept that it will be published
\institute{University of Minnesota\\ \email{manno012@umn.edu}}

%
% NB: a more complex sample for affiliations and the mapping to the
% corresponding authors can be found in the file "llncs.dem"
% (search for the string "\mainmatter" where a contribution starts).
% "llncs.dem" accompanies the document class "llncs.cls".
%

\maketitle

\begin{abstract}
\textit{Musical gestures connect the symbolic layer of the score to the physical layer of sound. I focus here on the mathematical theory of musical gestures, and I propose its generalization to include braids and knots. In this way, it is possible to extend the formalism to cover more case studies, especially regarding conducting gestures. Moreover,  recent developments involving comparisons and similarities between gestures of orchestral musicians can be contextualized in the frame of braided monoidal categories.
Because knots and braids can be applied to both music and biology (they apply to knotted proteins, for example), I end the article with a new musical rendition of DNA.
}
\keywords{category theory, musical gesture, composition, visual art, synesthesia}
\end{abstract}

\section{Introduction}

The study of musical gestures is a growing field of research in performance \cite{leman, fortuna, global}, composition \cite{greek_1}, mathematics \cite{visi_miranda_2}, physics \cite{wand_timpani_1}, cognitive sciences \cite{lipscomb}, as well as signal processing and computer sciences \cite{greek_2, bergsland}, and the comparisons among different definitions of musical gestures \cite{cadoz}. The mathematical definition of musical gestures uses the formalism of category theory \cite{macLane}. A musical gesture is a mapping from a directed graph (skeleton) to a system of curves in space and time\footnote{Indicated in the literature as a topological space \cite{jmm}.} (body) \cite{jmm}; see Section \ref{App1}. Such a definition has been extended to conducting and to singing. In fact, singing can be described as the result of inner -- partially conscious and non-conscious -- movements \cite{global, phd_mannone}. Recently, criteria to compare gestures have been proposed, mainly using homotopy \cite{greek_2} and mathematically defining similarities among homotopically equivalent gestures producing similar effects on the final sound spectra \cite{gest_sim}.\footnote{Two gestural curves are called {\em homotopically equivalent} if there exists a continuous mapping that transforms one into the other; if one can be continuously deformed into the other.} Such a theory also allows comparisons between music and visual arts in light of the same gestural generators, highlighting similarities between gestures leading to music, and gestures leading to visuals, such as drawing and sculpting and even writing, as proposed in \citeA{cadoz}. 

Gesture studies can borrow the formalism of networks recently applied to the comparison of visuals \cite{leonidas}, where concepts from category theory such as colimit and limit are used \cite{macLane}. We can define networks of images as well as networks of sounds and connect them \cite{networks}. We can describe visuals as the result of some drawing gesture(s), and thus we can define similarity between music and visual arts \cite{gest_sim}. In this way, we can refine a technique to translate music into images, and vice versa -- tridimensional images into music \cite{music_image_book}.\footnote{With the mapping: height to pitch, depth to time and length to loudness.} We can argue that the most successful examples of translation between different art fields verify crossmodal correspondences \cite{spence, phd_mannone}. We may refer to several studies in the field of crossmodal correspondences in psychology, psychophysics, neuroscience and linguistics \cite{gestalt, nobile, gent5, zbikowski, zbi, roffler} to support this idea. Other applications of networks in music also refer to cognitive models \cite{popoff1}. Moreover, visuals and sound can often be seen as parts of some unique, unitary entity, the {\em audio-visual object} and the {\em theory of indispensable attributes} \cite{kubovy}. In Kubovy's studies, it is highlighted that the main function of the visual channel is detecting shape, while the main function of the auditory channel is localizing moving sound sources. According to Kubovy, the indispensable attribute for visuals is shape, and the indispensable attribute for sound is pitch. This may constitute a conceptual basis for a sonification of essential lines through pitch variations, one of the possible techniques.  

Following the input of recent studies \cite{jed, jed2, popoff2}, I try to advance the mathematical theory of musical gestures using concepts from knot theory \cite{Adam} and braided monoidal categories \cite{funct_knots}. Knots and braids can describe several musical situations, such as conducting gestures, and hand-crossing piano playing -- let us think of Webern's Variations Op. 27 No. 2 (1922).
The formalism of functors and n-category theory \cite{baez_cat} can be easily extended to embrace these applications \cite{funct_knots}. Moreover, knots and braids are successfully applied inside theoretical models in biological studies, such as DNA \cite{Austin}.
The reason is that the molecule of DNA, the Deoxyribonucleic Acid, is formed by ``pairs of molecular strands that are bonded together'' and that ``spiral around each other'' \cite[p. 181]{Adam}. Thus, the famous double helix can be mathematically modelled as a pair of curves mutually twisting, easily described as consecutive braids. Moreover, DNA molecules are then packed and knotted several times. The entire double helix can be turned, closed in circles and coiled; it can thus be studied as a topic of topology and knot theory \cite{top_DNA}. Knots can also be found in other molecules \cite{bio1}.
Among the connections of DNA, knot theory and braids, we may consider Lissajous knots \cite{liss, lissajous2}, but first of all knotted proteins \cite{bio2, bio3}. Musicians may already be familiar with Lissajous for his experiments with tuning forks \cite{tuning_forks}, and for the patterns obtained with Chladni plates \cite{Chladni}.

Joining the concept of musical and visual gestures with knots and biology, I end the article with the description of an original musical piece whose structure is derived from the shape of DNA. DNA has already been the topic of musical transpositions and algorithm developments \cite{DNA_former2}.\footnote{See also the webpage \url{https://www.yourdnasong.com.}} Here the approach is different, but it can be interpreted as complementary. I use the same gestural generator for both the visual shape of DNA and the musical patterns, as I will describe in Section \ref{DNA_Section}. In fact, the mathematical theory of musical gestures has both a descriptive role (for analysis), and a prescriptive role (for composition of new works); see \citeA{origin} for an example of a creative technique. In fact, these examples show how inputs from theory and analytical approaches can be employed to enhance musical creativity. I will discuss this more fully in the conclusion. Sections \ref{App1} and \ref{App2} concern the mathematical definition of gestures, hypergestures and some basic definitions of 2-category theory. Section \ref{why} introduces knots in the framework of the mathematical theory of musical gestures. Section \ref{quantum} deals with monoidal categories and references to quantum mechanics. Section \ref{network_par} presents a generalized approach to networks using tools from category theory. Section \ref{DNA_Section} analyses an original musical rendition of DNA structure. The Conclusion summarizes the results and discusses further research. 

\section{Mathematical definitions}

\subsection{Gestures and Hypergestures}\label{App1}

The mathematical theory of gestures has been developed to explain the embodiment of music in performance. In fact, in order to produce sounds, musicians have to interact physically with instruments, transforming the symbolic indications contained in the scores, in a sequence of precise movements. The first definition is given in \citeA{jmm}.
Fig. \ref{gesture} shows the most simple case: a mapping from a skeleton constituted by a directed graph with only two points and an arrow connecting them, to a body given by a curve connecting two points in a topological space. This is completely general, and thus the scheme can be applied to any musical (but not only musical) gesture.
The skeleton is indicated with capital Greek letters, and the curves in the topological space $X$ with the symbol $\vec X$. Thus, a gesture can be notated as $Hom(\Delta,\vec X)$. In \citeA{jmm}, \citeA{global} and \citeA{gest_sim} it is used as the non-standard notation $\Delta@\vec X$. 

In a nutshell, the simplest gesture is a path connecting two points in the space. The entire gesture can be seen as a single point in a space of higher dimensions. A hypergesture is a path connecting two points, but these points are two gestures. Each point in the hypergestural curve is a gesture, so, in principle, it is possible to transform a gesture into another via an infinite collection of intermediate gestures. As a more practical example, we might think of a pianist making a movement to play a note. When the pianist moves the arm to play notes in several points of the keyboard, he or she is connecting gestures, and thus making a hypergesture.
% When we use the tensor product, we mean to give an idea of the dimension change.

The entire gesture can be considered as a point in the space of hypergestures; a curve connecting two points in such a space is a hypergesture.
\begin{figure}
\centerline{
\includegraphics[width=7cm]{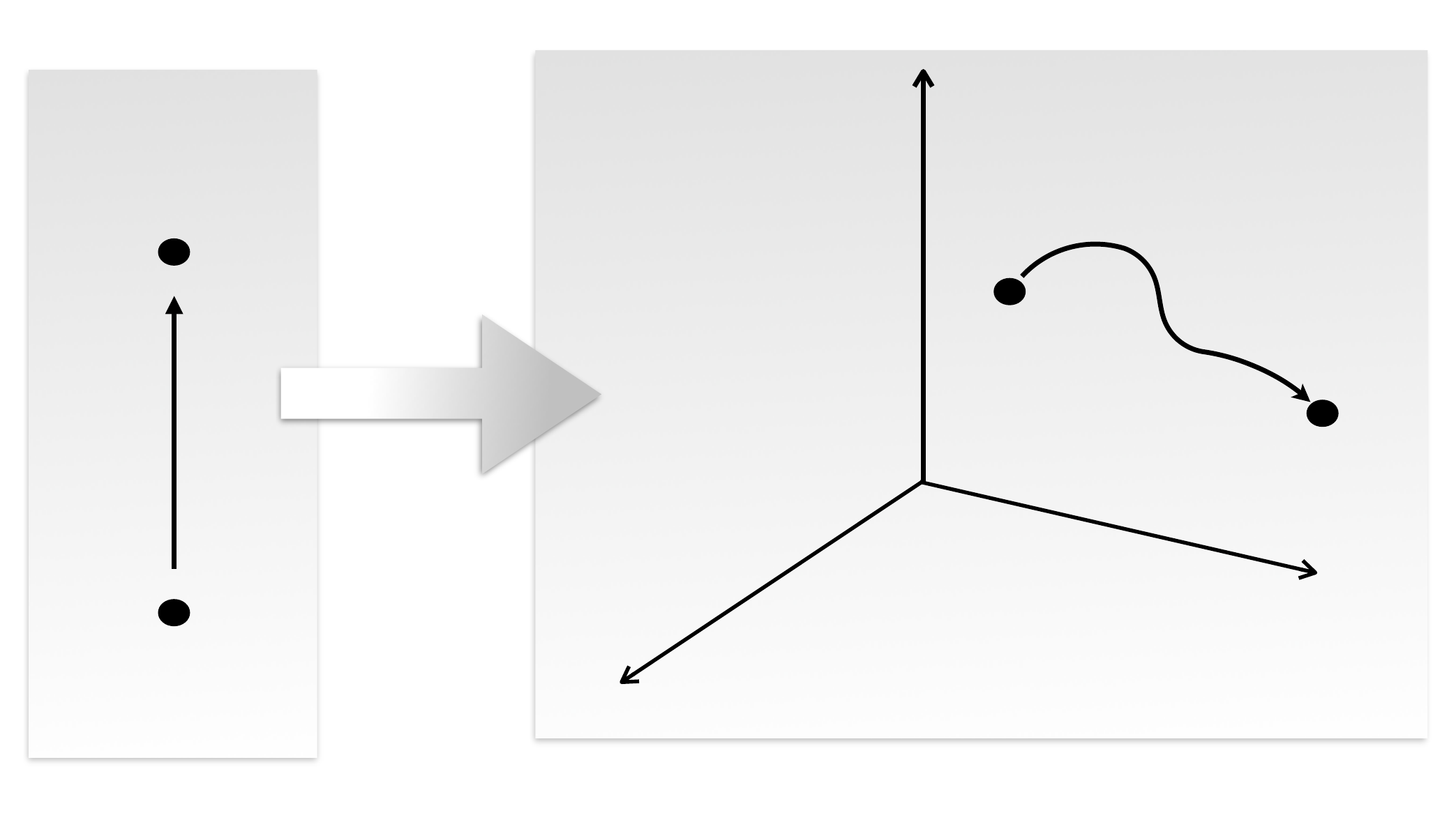}}
\caption{\label{gesture} \footnotesize A gesture is a mapping from a directed graph (left) to a system of continuous curves in a topological space (right). The image shows the simplest case, where we have only one arrow, and then only one curve.}
\end{figure}
In general, skeleta and bodies have more arrows. Recently, skeleta also involving branching have been studied \cite{global}, and extensions of the gesture theory to singing and conducting have been developed \cite{ToM_III_voice, ToM_III_conducting}. In singing, there are inner movements, only partially controlled by the performer. External movements, such as posture changes and hand movements/face expressions, as well as visual metaphors, help the singers to shape their vocal system correctly in order to pick the desired pitch and timbre.
Comparisons between gestures of different orchestral performers, as well as between conductor and orchestra, are described in \citeA{gest_sim}, where the formalism of 2-categories is involved.

\subsection{2-Categories}\label{App2}

Hypergestures can be more naturally described by means of 2-categories \cite{gest_sim}. In a nutshell, a 2-category is a category with morphisms between morphisms \cite{baez_cat}. More precisely, it is possible to talk about {\em equivalence classes of hypergestures} to have a 2-category; see Theorem \ref{quotient} for more details.

Fig. \ref{face} shows a smiling simplified face, Fig. \ref{2-cat} shows a 2-category and Fig. \ref{2-2-cat} shows a nested structure with a 2-category whose objects are other 2-categories.
In Fig. \ref{2-cat}, there are two objects, with two different morphisms connecting them. The two objects are two categories, and the morphisms are functors. The double arrow transforming one functor into the other is a natural transformation.
In principle, we can have n-categories and infinite-categories \cite{infinite}. 2- and n-categories are also important for theoretical physics \cite{baez_cat}.

Let us now examine musical examples of 2-categorical thinking. One can think of two different ways to perform the same musical sequence, one with loudness {\em piano}, the other with {\em forte}. One can transform the first version into the second one, via a {\em crescendo}. However, there are different ways to make a {\em crescendo}: slower or faster, for example. One can define a {\em natural transformation} ``tempo'' to transform the slower crescendo into the faster crescendo. A ``tempo'' transformation would thus be an arrow between arrows. A fast crescendo can be played by strings, or by winds; the same is true for a slow crescendo. There can also be transitions from strings to winds in orchestration, and one can describe this via another arrow, and so on. All these musical structures can be easily described using the formalism of 2- and n-categories.
\begin{figure}
\centerline{
\includegraphics[width=4cm]{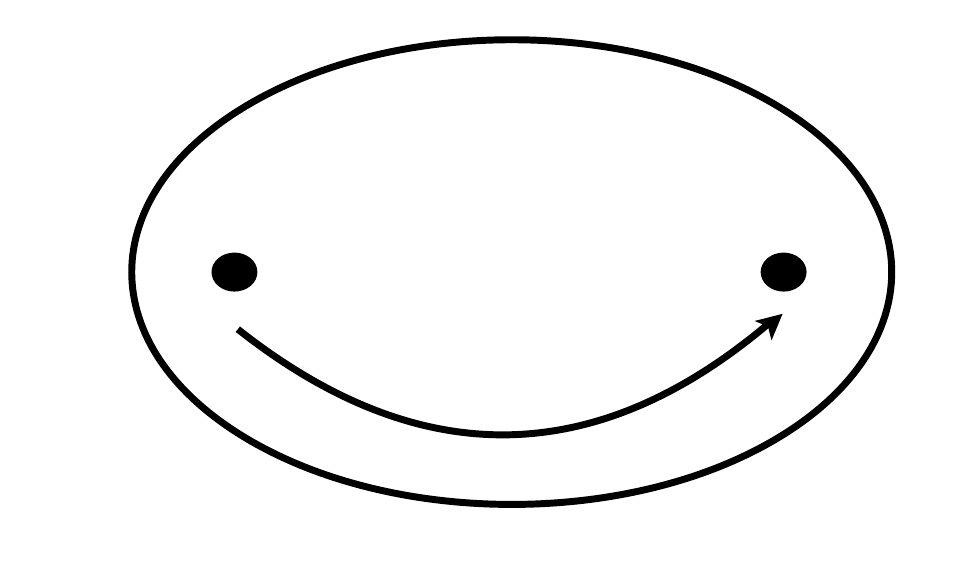}}
\caption{\label{face} \footnotesize A smiling face; a humour depiction of a morphism connecting two objects.}
\end{figure}
\begin{figure}\label{2-cell}
\centerline{
\includegraphics[width=4cm]{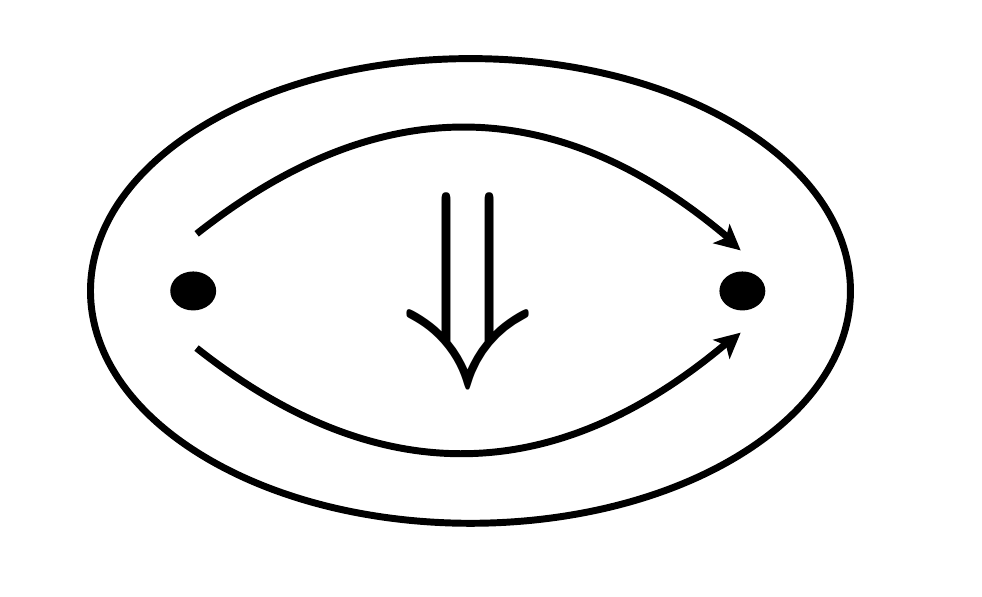}}
\caption{\label{2-cat} \footnotesize An example of a 2-category.}
\end{figure}
\begin{figure}
\centerline{
\includegraphics[width=7cm]{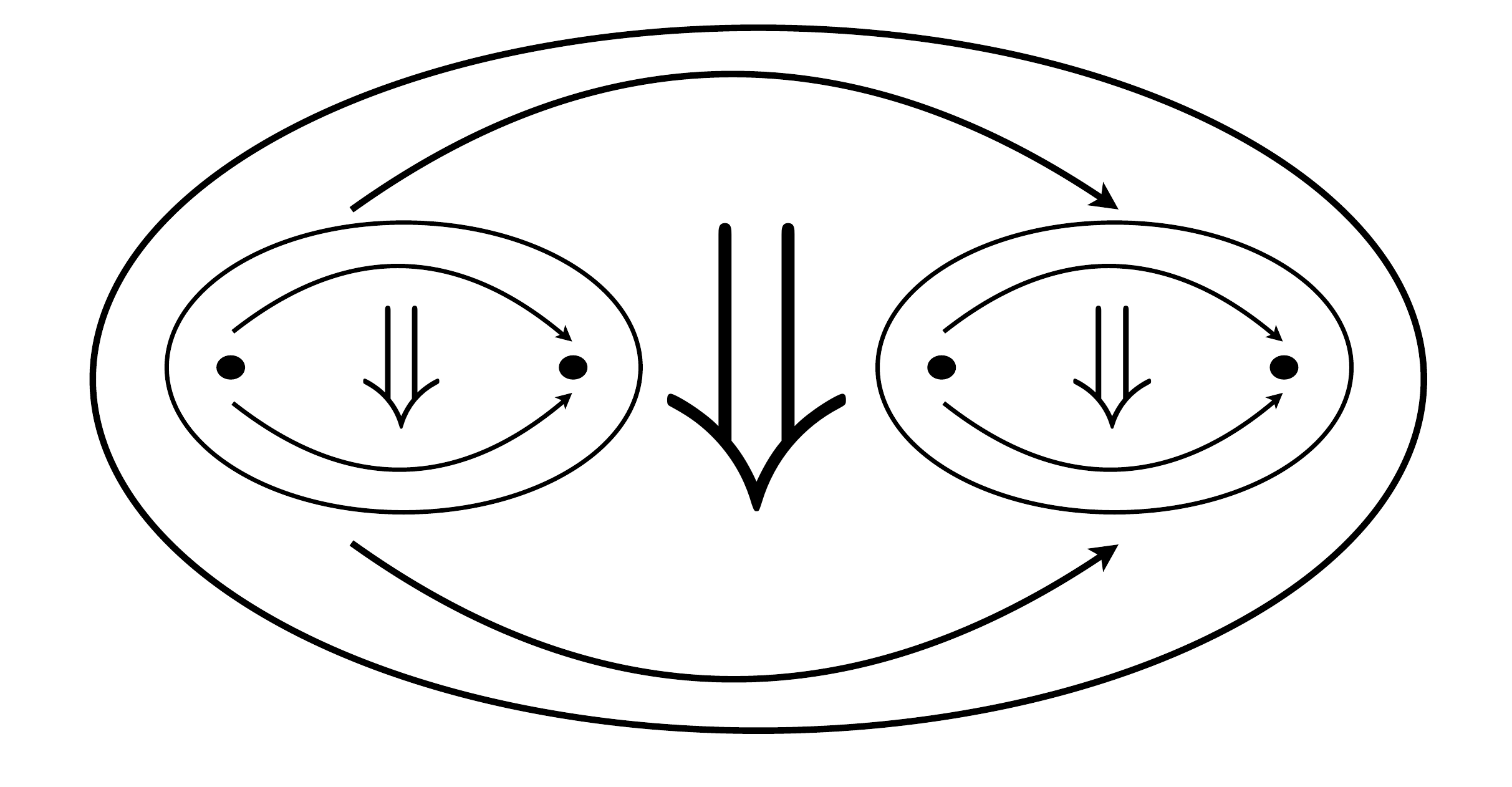}}
\caption{\label{2-2-cat} \footnotesize A 2-category containing two 2-categories, giving a 4-category.}
\end{figure}

\section{Why (k)not?}\label{why}

%As cited in the Introduction, a musical gesture is defined as a mapping from a directed graph (skeleton) into a system of continuous curves in a topological space (body) \cite{jmm}. In more physical terms, we can talk about space and time. We can have gestures, creating a gesture of gestures, called {\em hypergesture}, see details in Appendix \ref{App1}. This definition has been generalized to branched skeleta \cite{global}, and applied to several musical cases, comprised of the two ``extreme'' cases: conducting---gesture without direct sound production, and singing---inner gesture and inner musical instrument \cite[Chapter 37]{ToM_III}, \cite{phd_mannone}.

Jedrzejewski \cite{jed2} raised a question concerning the absence of any formalism, in Mazzola's mathematical theory of gestures, to describe gestures that inherit some characteristics from other gestures. The problem of gestural inheritance had already been highlighted years before the development of such a mathematical theory \cite{cadoz}. 
A way to overcome this difficulty may be the use of monoidal tensor categories \cite{funct_knots} and of knot theory \cite{Adam}. Due to its nature, a musical gesture -- let us say, the objectivized trajectory of a point, for example the tip of the conducting baton -- may contain repetitions of the same points in the path. This fact, as pointed out by Jedrzejewski, can be easily framed in knot theory. One can imagine a gesture as a closed path, and we can investigate whether a gestural curve may be reduced to the unknot, or ... not. A very first and intuitive example of this is the conducting gesture of the right hand, closed and cyclic.
Knot theory is related to braids, and braided (monoidal) categories \cite{baez_cat} are a powerful tool of analysis that can be used in gesture theory \cite{jed2, jed3}. Another comparison between knot theory and gesture theory may come from {\em billiard knots}, i.e., the study of the trajectory of a ball on a billiard table. It is possible to study the correspondence between the straight lines reflected in the wall, and a curve bending on itself described in knot theory \cite{lissajous2}. One might compare the billiard ball's trajectory with the skeleton of the gesture, and the smooth knot path with the body of the gesture. There is not only one knot corresponding to a billiard ball's path, just as there is not only one body (of a gesture) corresponding to a skeleton (of a gesture).

Moreover, one can extend the comparison between knot theory and gesture theory through the concept of {\em link}: for example, the trajectories of the two hands of an orchestral conductor do not intersect, but may be linked. This would constitute a development of analysis of conducting gestures in mathematical terms.

In summary, a description of musical gestures closer to their physical reality and their incredible complexity joins several approaches so far envisaged: branching and global skeleta \cite{global}, braided monoidal categories \cite{baez_cat, jed2}, knots \cite{Adam, jed}, 2-and n-categories \cite{gest_sim, infinite}, and networks \cite{networks}.\footnote{The connection between different theories and approaches can go on and on. For example, it is possible to frame the world-sheets \cite{global} in Cobordism of complex spaces $(Cob_C)$.}
For example, the skeleton of a gesture can be branched \cite{global}, but also twisted in a braid, and the body will consequently be twisted; see Fig. \ref{fig_1}.
Also, we can have knots when the skeleton is ``simple''; see Fig. \ref{fig_2}.
Analysing conducting gestures, it seems that they are (all?) realizations of the unknot; see Fig. \ref{fig_3}.

In principle, we can also use concepts from physics, such as QFT (Quantum Field Theory), fields (as defined in physics), and path integrals for each couple of points, to describe the possible realizations of a skeleton once the initial and final points (within the topological space) have been chosen -- in all my examples, I chose the subspace topology of knots.

 \begin{figure}
 \centerline{
\includegraphics[width=5cm]{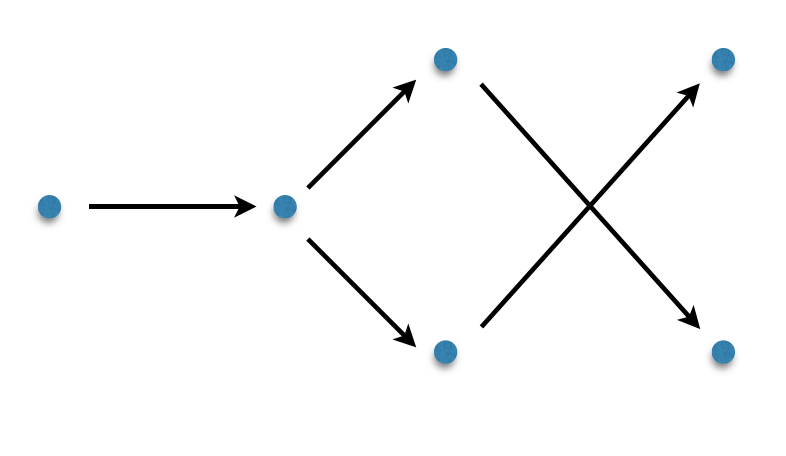}}
\caption{\label{fig_1} \footnotesize A skeleton that is first branched and then braided.}
\end{figure}

 \begin{figure}
 \centerline{
\includegraphics[width=7cm]{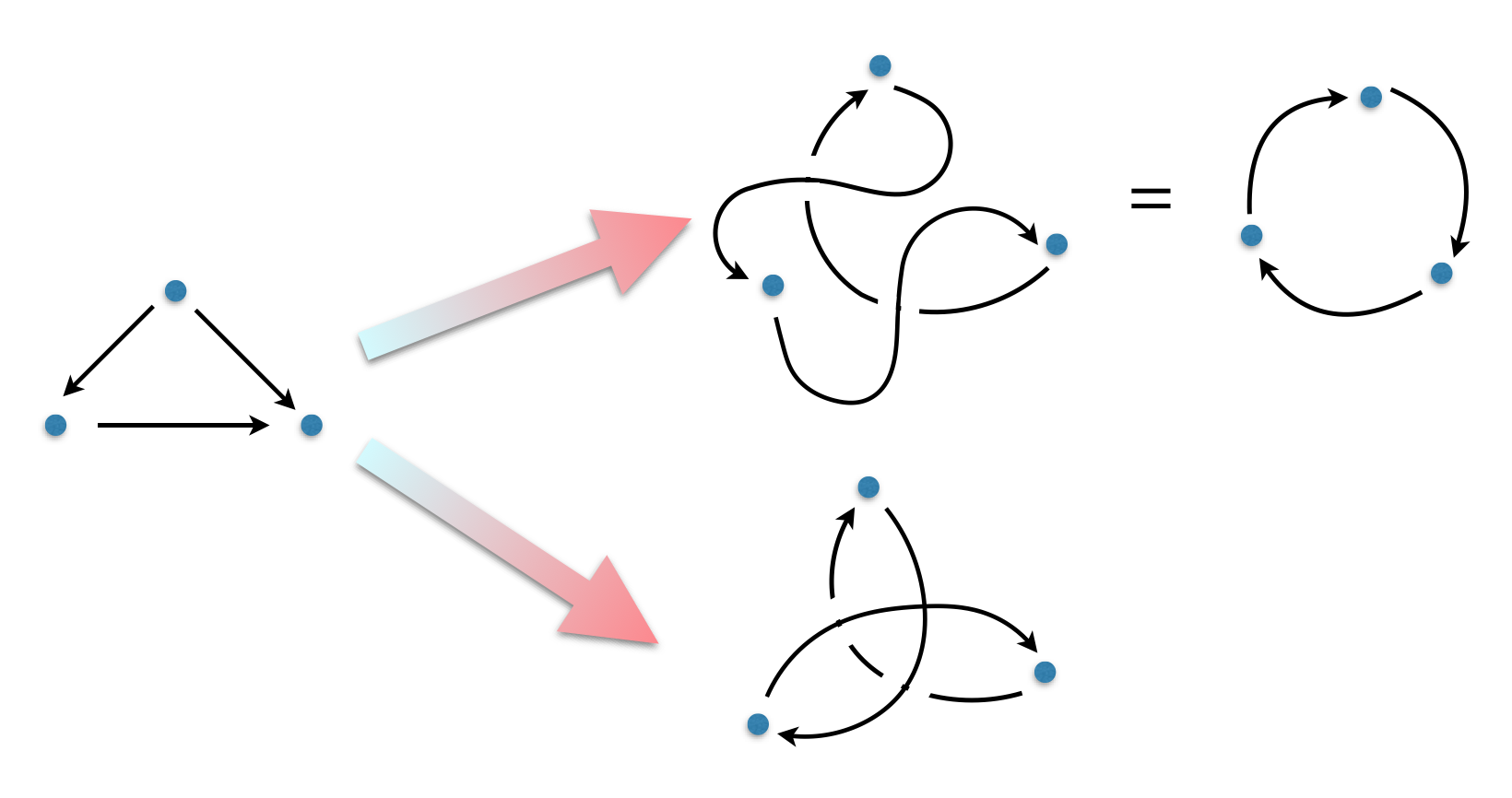}}
\caption{\label{fig_2} \footnotesize The three-point skeleton can be mapped either in the unknot, or in a trefoil knot (if we see the body of the gesture as a closed line).}
\end{figure}

 \begin{figure}
 \centerline{
\includegraphics[width=8cm]{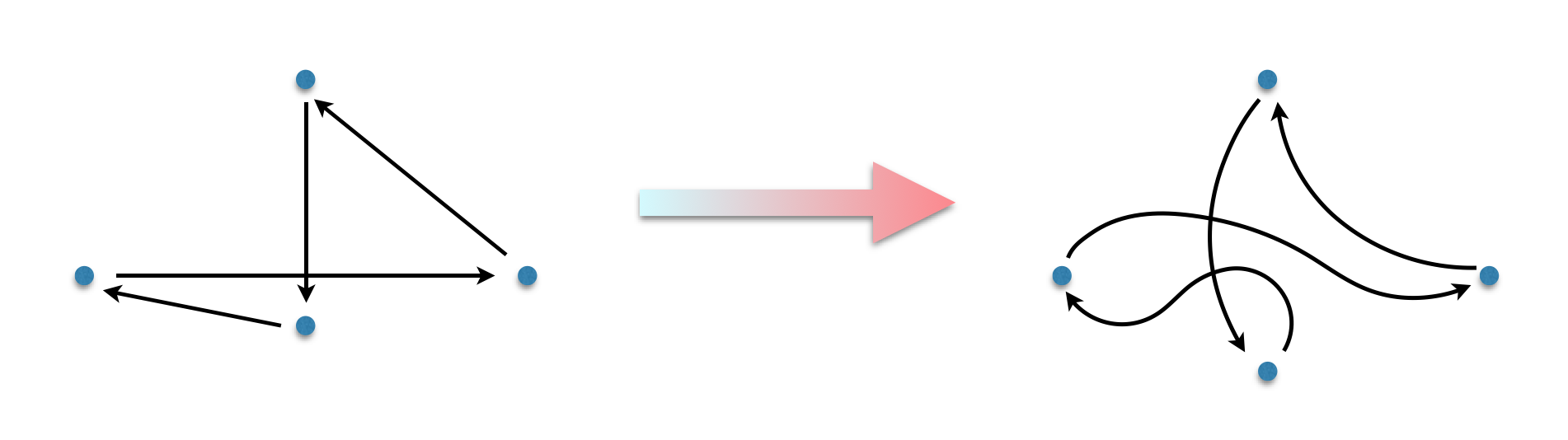}}
\caption{\label{fig_3} \footnotesize The skeleton of a quaternary conducting movement, and its realization as a system of continuous curves. If it is seen as a unique closed curve of knot theory, conducting gestures correspond to the unknot.}
\end{figure}

One can imagine developing a ribbon category into a conducting gesture with modulo 3; see Fig. \ref{fig_4}.
The definition itself of gesture can be extended and improved in knot frameworks \cite{jed2}.
% In this work, it is discussed the important topic of gestures inherited one from the other, an idea absent from the first studies on the field \cite{jmm}.
Returning to the first mathematical definition of a gesture \cite{jmm}, and to the use of 2- and n-categories \cite{baez_cat, infinite} applied to music \cite{gest_sim}, one can further define the concept of hypergesture.

 \begin{figure}
 \centerline{
\includegraphics[width=8cm]{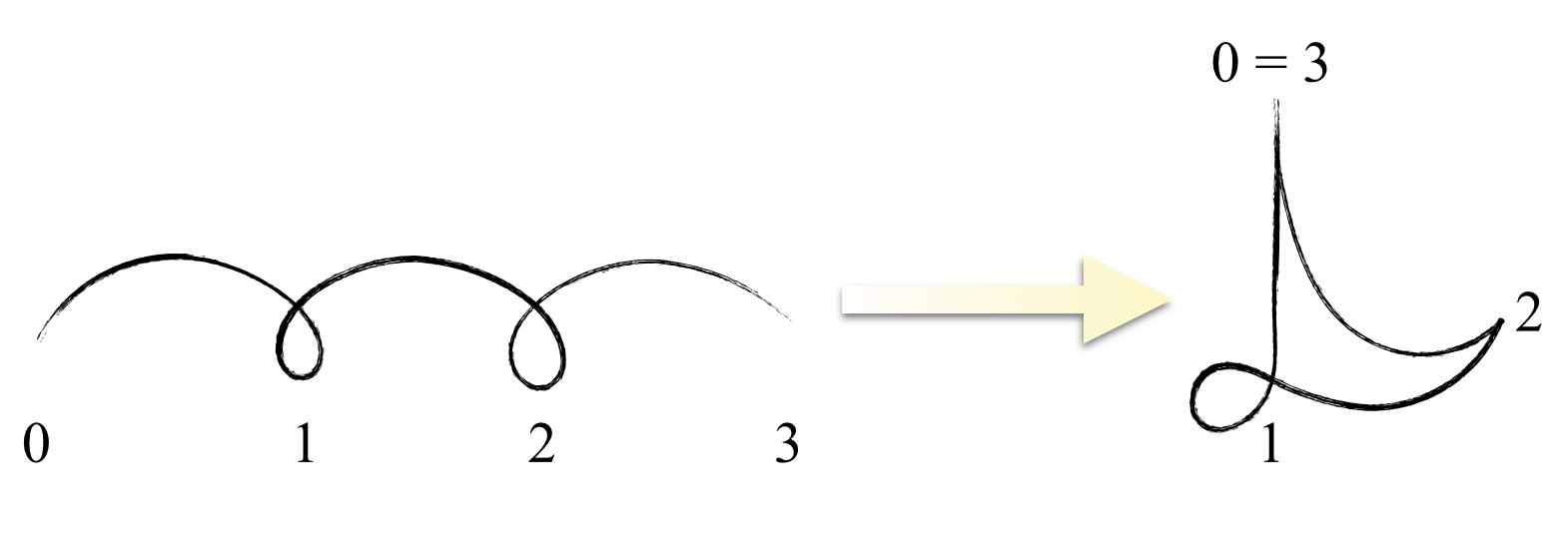}}
\caption{\label{fig_4} \footnotesize The ribbon-category useful to describe the graph on the left can be closed, giving a ternary-time conducting gesture, modulo 3.}
\end{figure}

\section{Hypergestures, Monoidal Categories and possible openings to Quantum Mechanics and Music}\label{quantum}

\subsection{Hypergestures}

%In \cite{jed2} are introduced monoidal categories to describe musical gestures.
%Here, we refer of some simple concepts of tensor products and braided categories.

%A hypergesture is a gesture of gestures, as previously defined \cite{jmm}.

In the space of hypergestures, points are gestures.
Let $\vec X$ be a space of gestures, let $\Delta, \Gamma$ be two skeleta, and let $g_1$ and $g_2$ be two gestures.
In the space $\vec Y$, the hypergesture $l_1$ connects point $g_1$ with point $g_2$, where
\begin{equation}
g_1=Hom (\Delta, \vec X),\,\, g_2=Hom (\Gamma, \vec X).
\end{equation}
But gestures, and consequently also hypergestures, are mappings from a skeleton to a system of curves in a topological space: so, given a skeleton $\Xi$, there is $h_1$ that maps $\Xi$ into $\vec Y$. But the curve in Y is $l_1$ connecting $g_1$ and $g_2$, if the skeleton $\Xi$ is an arrow connecting two points.
One can thus describe the hypergesture, $h_1=Hom(\Xi,\vec Y)$, as a mapping from $\Xi$ to $l_1=Hom(g_1,g_2)$, i.e., $h_1=Hom(\Xi,(Hom(g_1,g_2)))$.
If there are two possible paths, $h_1^{\alpha}$ and $h_1^{\beta}$ from $\Xi$ to $\vec Y$, one can define a morphism of morphisms $\alpha\beta$ connecting them. The same argument can be applied to the gestures from $\Delta$ to $\vec X$ and from $\Gamma$ to $\vec X$, as well as to the hypergestural curve(s) in $\vec Y$ space. Using the formalism of 2-categories, we obtain the diagram shown in Fig. \ref{alpha_1}.
 \begin{figure}
 \centerline{
\includegraphics[width=7cm]{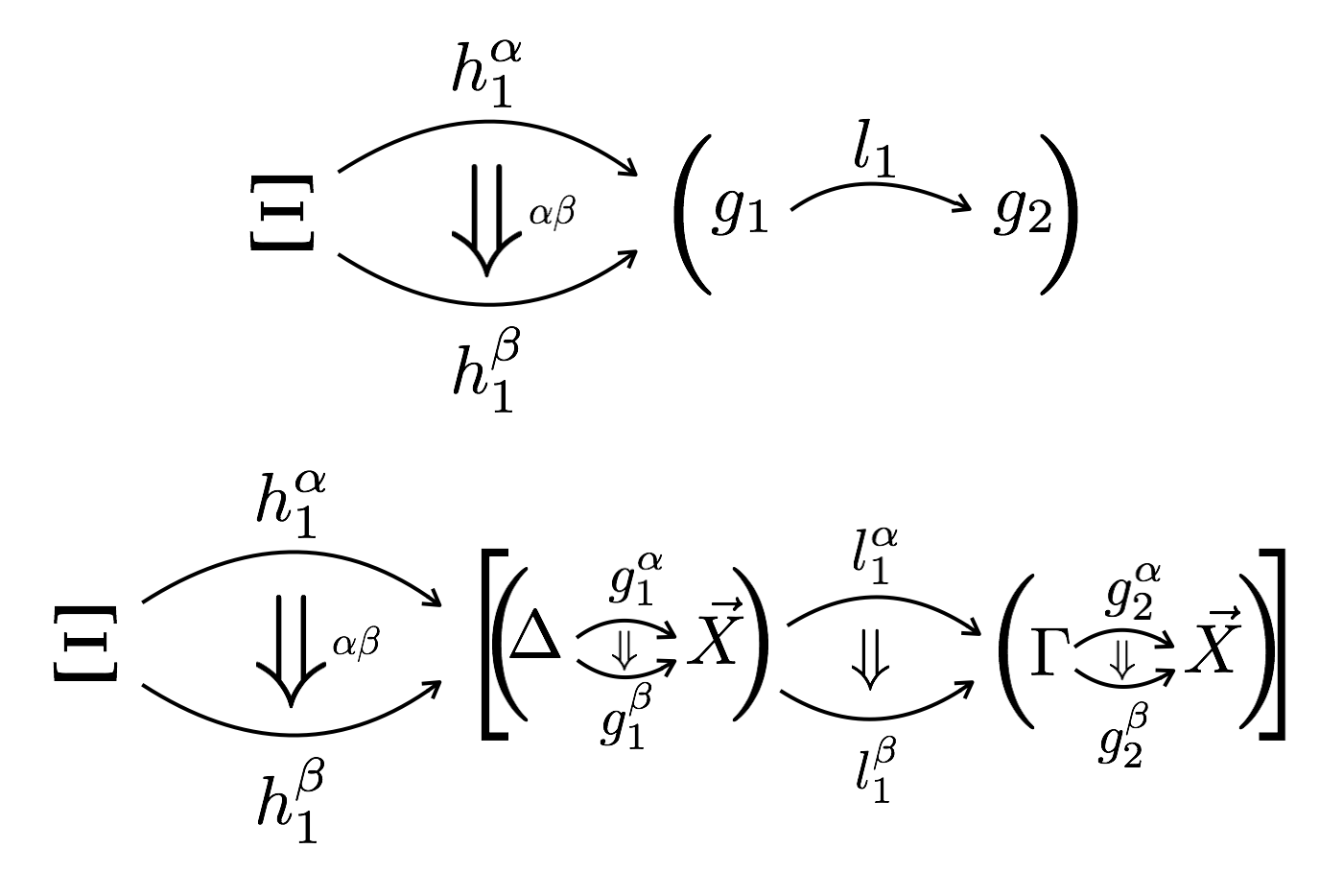}}
\caption{\label{alpha_1} \footnotesize For a given skeleton $\Xi$ corresponding to the simple structure point-arrow-point, we sketch here a 2- (and 4-) categorical description of morphisms of morphisms. We have a recursive structure of arrows and arrows between arrows.}
\end{figure}
Following the intuitive concept of nested gestures given by $Hom(Hom(...))$, one may argue for the existence of {\em gestural modulations}, the gestural analogues of frequency modulations.

One can summarize this last result in a theorem:
\begin{theorem} Gestures' and hypergestures' structures can be defined recursively.  \end{theorem}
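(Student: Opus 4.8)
The plan is to prove the statement by induction on the \emph{level} of nesting, exhibiting a single construction that, applied to its own output, generates hypergestures of arbitrary depth. The essential point is a \emph{closure} property: the totality of gestures with a fixed skeleton into a fixed body is again a space of the same kind, so that it may itself serve as the body for the next application of the gesture construction.

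First I would fix the base of the recursion. A gesture of level $1$ is, by the definition recalled in Section \ref{App1}, an element of $Hom(\Delta,\vec X)=\Delta@\vec X$, the set of continuous maps from the skeleton $\Delta$ into the system of curves $\vec X$ of a topological space $X$. I would endow this set with the compact-open topology, obtaining a topological space $\vec{X}^{(1)}$ whose own system of curves is a legitimate body; this is exactly the move made in Section \ref{quantum}, where the space $\vec Y$ of gestures plays the role that $\vec X$ played one level below. The inductive step is then purely formal: assuming the space of level-$n$ hypergestures $\vec{X}^{(n)}$ has been constructed as a topological space, a level-$(n+1)$ hypergesture is an element of $Hom(\Xi_{n+1},\vec{X}^{(n)})$, and topologizing this mapping set yields $\vec{X}^{(n+1)}$. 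Thus the defining equation $h_1=Hom(\Xi,(Hom(g_1,g_2)))$ of Section \ref{quantum} is precisely the case $n=1$, and the nested symbol $Hom(Hom(\dots))$ is realized literally. Because the target of each step is an object of the same category as its input, the recursion never leaves that category and may be iterated to any finite depth.

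The main obstacle is the closure property itself, namely verifying that each mapping space $\Delta@\vec X$ is again a topological space on which the construction repeats. With the compact-open topology this is ensured because the skeleton $\Delta$, being the realization of a finite directed graph, is compact and hence locally compact Hausdorff, so the exponential law $Hom(\Xi\times\Delta,X)\cong Hom(\Xi,Hom(\Delta,X))$ holds and the iterated mapping spaces are well-defined; alternatively one may work throughout in the Cartesian closed category of compactly generated spaces, where the internal hom always exists and the exponential law holds unconditionally. Granting this hypothesis---standard in the gesture-theoretic literature cited above---the inductive step passes unchanged through every level, and the recursive definability of gestures and hypergestures follows. I would close by remarking that the same bookkeeping records the $2$-categorical structure (morphisms between morphisms) appearing in Fig. \ref{alpha_1}, since each application of $Hom$ contributes one further dimension of arrows.
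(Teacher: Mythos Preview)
Your argument is correct, but it proceeds quite differently from the paper's. You run an \emph{ascending} induction on the nesting level for arbitrary skeleta, and you locate the real content in the topological closure property---that $Hom(\Delta,\vec X)$ with the compact-open topology is again a space of the right kind, so the gesture constructor may be reapplied; your appeal to the exponential law (or to a Cartesian closed ambient category) makes this precise. The paper instead fixes the minimal skeleton $\Delta=\Gamma=\Xi=$ point--arrow--point and argues by \emph{descent}: an $N$-hypergesture with that skeleton is a path between two $(N-1)$-hypergestures, each of which is a path between $(N-2)$-hypergestures, and so on until one bottoms out in pairs of points of $X$; the recursion is thus exhibited by unfolding the nested $Hom(Hom(\dots))$ down to base data rather than by building it up. Your route buys generality (it is not tied to the single-arrow skeleton) and rigor (you actually name the topology and the hypothesis under which iteration is legitimate); the paper's route buys a concrete picture---Fig.~\ref{diagram}---and an explicit reduction of the $N$-structure to a finite collection of points, which is the form in which the recursion is used later. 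One small caveat: your compactness claim for the realized skeleton presumes the digraph is finite, which the paper does not state in general, though all its examples are.
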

\begin{proof}
\footnotesize Let $\Delta, \Gamma, \Xi$ be skeleta, and $\vec X$ a topological space. Gestures $g_1,\,g_2$ are defined as $g_1=Hom(\Delta,\vec X)$ and $g_2=Hom(\Gamma,\vec X)$. Let $\Delta = \Gamma = \Xi =$ point-arrow-point. Thus, both $g_1$ and $g_2$ are lines connecting two points. A line in $\vec Y$, the space of hypergestures, connects $g_1$ and $g_2$; see Fig. \ref{diagram}. If we generalize such a structure to N-dimension, we will still have two points connected. A N-hypergesture with point-arrow-point skeleton connects (N-1)-hypergestures, each (N-1)-hypergesture with point-arrow-point skeleton connects (N-2)-hypergestures, and so on. Finally, the N-structure can be reduced to a collection of pairs of points in the $X$ space ($\vec X$ being the space of curves with points in $X$).
\end{proof}

 \begin{figure}
 \centerline{
\includegraphics[width=7cm]{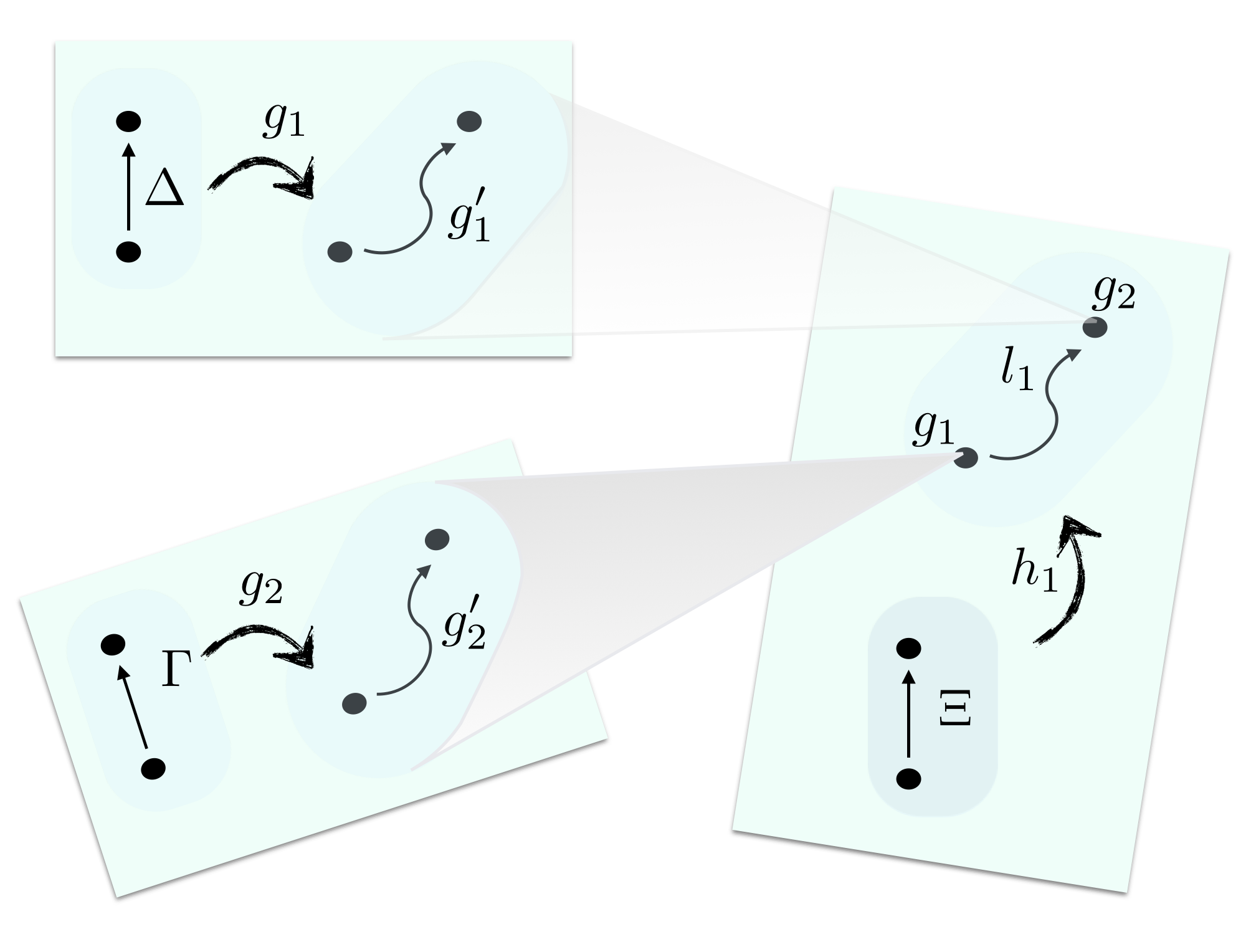}}
\caption{\label{diagram} \footnotesize Recursive structure of gesture and hypergesture structure. There is a slight abuse of notation here, because with $g_1,\,g_2$ we indicate the mapping from skeleta to the body of gestures but also the body of gestures themselves. For this reason, we indicate the latter with $g'_1,\,g'_2$. Let $p_1,\,p_2$ be two points connected by the first gesture, and let $p_3,p_4$ be two points connected by the second gesture. We have: $g'_1=Hom(p_1,p_2)$ and $g'_2=Hom(p_3,p_4)$. Thus, $l_1=Hom(g'_1,g'_2)=Hom(Hom(p_1,p_2), Hom(p_3,p_4))$. A higher-order hypergesture can be written as $Hom(Hom(..., Hom(p_1,p_2),Hom(p_3,p_4)))$. }
\end{figure}

Exploring modulation in the sense of a gradual change, one can also refer to tempo modulation, tonal modulation and intensity modulation. In this way, it is possible to develop a categorical approach to musical concepts with generalized functors and natural transformations.\footnote{In initial literature about mathematical gesture theory \cite{jmm} contravariant functors are often used.}

To determine if it is possible to apply such a recursion to the 2-category structure, one can see more details about the associativity of hypergestures using their quotient space.\footnote{The quotient set $X/\sim$ is the set of all possible equivalence classes in $X$. If $X$ is a topological space, then $X/\sim$ is the quotient space.} 

\begin{theorem}\label{quotient}
The composition of hypergestures (paths) is associative up to a path of paths.
\end{theorem}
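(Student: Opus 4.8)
The plan is to treat hypergestures as paths in the space $\vec{Y}$ and to show that the two bracketings of a triple concatenation differ only by a reparametrization, which can be continuously interpolated; such a continuous interpolation is precisely a \emph{path of paths}, i.e.\ a higher hypergesture in the sense developed above. First I would recall the concatenation rule: given composable paths $f,g\colon[0,1]\to\vec{Y}$ with $f(1)=g(0)$, their composite $f\cdot g$ runs $f$ on $[0,1/2]$ at double speed and $g$ on $[1/2,1]$ at double speed. Applying this twice shows that $(f\cdot g)\cdot h$ breaks $[0,1]$ at the points $1/4$ and $1/2$, whereas $f\cdot(g\cdot h)$ breaks it at $1/2$ and $3/4$. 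The two composites therefore trace the same three segments but at different speeds, so they cannot be expected to agree on the nose.

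The central step is to write down an explicit homotopy $H\colon[0,1]\times[0,1]\to\vec{Y}$ with $H(\cdot,0)=(f\cdot g)\cdot h$ and $H(\cdot,1)=f\cdot(g\cdot h)$. The idea is to let the two breakpoints slide continuously with the homotopy parameter $s$: as $s$ runs from $0$ to $1$, the first breakpoint moves from $1/4$ to $1/2$ and the second from $1/2$ to $3/4$. On each of the three resulting subintervals one reparametrizes $f$, $g$, $h$ linearly onto $[0,1]$. For each fixed $s$ the map $H(\cdot,s)$ is thus the piecewise path running $f$, $g$, $h$ over the moving subintervals, which is a single hypergesture; letting $s$ vary then produces a path in the space of hypergestures, exactly the path of paths the statement promises.

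To conclude I would verify that $H$ is continuous. Since $H$ is defined piecewise by formulas continuous on closed subregions of the square $[0,1]\times[0,1]$, continuity follows from the pasting (gluing) lemma, provided neighbouring pieces agree along the curves traced by the two moving breakpoints. These agreements hold because at each moving breakpoint the two adjacent pieces evaluate to the common endpoint $f(1)=g(0)$ (respectively $g(1)=h(0)$), so no discontinuity is introduced. In the quotient picture of the footnote, $H$ then descends to an identification of the classes of $(f\cdot g)\cdot h$ and $f\cdot(g\cdot h)$ in $\vec{Y}/{\sim}$, which is what is needed to obtain a genuine $2$-category of equivalence classes of hypergestures, as claimed just before Theorem \ref{quotient}.

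The hard part will be the bookkeeping in this explicit homotopy — choosing the moving breakpoints and the local linear reparametrizations so that all formulas glue continuously and reduce correctly at $s=0$ and $s=1$ — rather than any deep topological input; the only nontrivial ingredient is the pasting lemma.
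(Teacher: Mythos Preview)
Your proposal is correct and follows essentially the same approach as the paper: both argue that the two bracketings $(f\cdot g)\cdot h$ and $f\cdot(g\cdot h)$ differ only by a reparametrization and then exhibit a homotopy (the paper's $r(t,x)$, your $H(s,t)$) interpolating between them, which is the required path of paths. Your version is in fact more explicit than the paper's --- you identify the moving breakpoints $1/4\to 1/2$ and $1/2\to 3/4$ and invoke the pasting lemma for continuity, whereas the paper merely asserts the existence of $r$ --- but the underlying idea and the subsequent passage to equivalence classes to obtain a genuine $2$-category are the same.
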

\begin{proof}\footnotesize
To define 2-categories, we need to prove that the associativity property is satisfied. Let $X$ be a topological space, let $\vec X$ (or $X^I$, $I\rightarrow X$) be the space of paths within $X$, with $I=[0,1]$ the unitary interval. Let $f,\,g$ be two paths that connect points, and let each point be a gesture. The composition of $f,\,g$ needs a reparametrization: $f(x)$ if $0<x<1/2$, and $g(x)$ if $1/2<x<1$. While composing it with another path $h$, $h(gf)=(hg)f$, the parametrization is not the same, and we no longer have a category. However, the problem seems to be solved if we define the reparametrization $r(t,x)$ as $r(0,x)=h(gf)(x)$ and $r(1,x)=(hg)(f(x))$. The composition of two paths is associative up to a path of paths. If we try to compose two paths of paths, we find that it is associative up to a path of paths of paths.  
We can use, instead of paths, equivalence classes of paths. In fact, the multiplication of equivalence classes is associative.
Provided that objects (points) are gestures, they are called 0-cells. Hypergestures (paths between points) are 1-cells, and hyper-hypergestures (paths of paths) are 2-cells. We conclude that the composition of 1-cells is associative. If we are looking at paths and 2-cells (and not higher-order combinations), we can redefine 2-cells, taking equivalence classes up to a 3-cell.

% Provided that a 1-cell is a gesture (an arrow) and a 2-cell is a hypergesture (arrow between two arrows, see Figure \ref{2-cell}), a 3-cell is a connection of two hypergestures, that is, a path of paths. Thus, 2-cells compose up to a 3-cell.

% We have two possible scenarios: 1. either we consider associativity in N-cells up to a (N+1)-cell, or 2. we can consider not the paths, but the equivalence classes of paths {\em up to a 3-cell}, if we are only interested to 2-cells and not to higher-order cells.
\end{proof}
In summary, we can redefine hypergestures as {\em equivalence classes of hypergestures} to formally have a 2-category. The alternative solution is not to limit our analysis to 2-categories but to look instead at N-categories, being aware that the associativity is verified {\em up to a (N+1)-cell}, with $N\rightarrow\infty$.

% Here and in the following, while talking about {\em hypergestures}, we will implicitly assume {\em equivalence classes} of hypergestures. 

\subsection{Tensor products}
Let us consider a gesture $g_1$, represented by a curve in $\vec X$. In the space $\vec Y$ of hypergestures, $g_1$ is a point. To indicate the embedding in the higher dimensional space, we can introduce tensor products here, choosing the notation $g_1\in\vec X$, and $g_1\otimes 1\in\vec Y$. The same argument can be applied to another gesture; let us say $g_2$: $g_2\in\vec X$,
$g_2\otimes 1\in\vec Y$. Curves $g_1$ and $g_2$ are connected by a surface in $\vec X$, that is a line in $\vec Y$; let us call it $l_1$.\footnote{The described surface is defined as a world-sheet in \citeA{mcm15} and \citeA{global}.} Thus, $l_1\in\vec Y$. $l_1$ is the path that connects $g_1$ and $g_2$: $l_1=path(g_1,g_2)\in\vec Y$. We have the following theorem:
\begin{theorem} Let $g_1$ and $g_2$ be two gestures in $\vec X$. The hypergesture connecting them, i.e., the path $l_1$ connecting them, lives in 1-dimensional higher space $\vec Y$, and thus the tensor product of $g_1$ and $g_2$ lives in $\vec Y$ too:
\begin{equation}g_1\otimes_{l_1}g_2\in\vec Y.\end{equation}
\end{theorem}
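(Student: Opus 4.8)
The plan is to establish this essentially by unpacking the definitions fixed in the preceding paragraphs and then identifying the two sides of the membership claim as the same object of $\vec Y$. The statement is structural rather than computational: it asserts that the decorated tensor product $g_1 \otimes_{l_1} g_2$ is nothing other than the hypergesture $l_1$, which has already been placed in $\vec Y$.

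First I would recall the facts already in hand. Each gesture $g_1 = Hom(\Delta, \vec X)$ and $g_2 = Hom(\Gamma, \vec X)$ is a point of $\vec X$, and under the embedding $g_i \mapsto g_i \otimes 1$ it becomes a point of the one-dimension-higher space $\vec Y$. At the same time the connecting datum was defined as $l_1 = path(g_1, g_2) \in \vec Y$: the surface (world-sheet) joining the two curves $g_1, g_2$ in $X$ is realized as a one-dimensional path once one passes from the base $X$ to the gesture space $\vec X$, and therefore as an element of $\vec Y$.

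Next I would identify the indexed tensor product with this path. In the monoidal framework, the subscript $l_1$ is precisely the datum recording how the two gestures are fused; the object $g_1 \otimes_{l_1} g_2$ is thus the pair of endpoints $g_1, g_2$ together with the connecting path $l_1$ --- that is, the hypergesture $l_1$ itself. Since $l_1 \in \vec Y$ by construction, the membership $g_1 \otimes_{l_1} g_2 \in \vec Y$ follows at once. A brief dimension count confirms the consistency: a connecting path between points of $\vec X$ raises the relevant dimension by exactly one, so the product lands in $\vec Y$ rather than back in $\vec X$.

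The hard part will be making rigorous the meaning of the decorated symbol $\otimes_{l_1}$ inside the monoidal category of gestures, and verifying that the monoidal product of two objects, once decorated by the connecting morphism, genuinely coincides with the hypergesture defining $\vec Y$ rather than living in some unrelated space. This amounts to checking the compatibility between the tensor structure on $\vec X$ and the path structure that defines hypergestures; once that compatibility is secured, the theorem reduces to the observation that both sides name the same element of $\vec Y$.
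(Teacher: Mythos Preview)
Your proposal is correct and follows essentially the same route as the paper: both embed $g_1,g_2$ into $\vec Y$ as $g_i\otimes 1$, note that $l_1=path(g_1,g_2)\in\vec Y$ by construction, and then identify $g_1\otimes_{l_1}g_2$ with $l_1$ as a notational rewriting (the paper passes through the intermediate form $(g_1\rightarrow g_2)\otimes 1$ before relabeling it $g_1\otimes_{l_1}g_2$). One small slip to fix: in the paper's conventions each $g_i$ is a \emph{curve} in $\vec X$ and a \emph{point} in $\vec Y$, not a point of $\vec X$; this does not affect your argument.
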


\begin{proof}
\footnotesize
The gesture $g_1$ is represented by a curve in $\vec X$, but it is a point in $\vec Y$. Thus, it can be considered as a trivial curve in $\vec Y$, and we write this as $g_1\otimes 1\in\vec Y$. Similarly, $g_2\otimes 1\in\vec Y$. The path $l_1$ connecting $g_1,\,g_2$ is a curve in $\vec Y$, and thus $l_1\in\vec Y$. If we connect the two trivial curves $g_1\otimes 1$ and $g_2\otimes 1$, we get a curve in $\vec Y$: 
\begin{equation}
[(g_1\otimes 1)\rightarrow(g_2\otimes 1)\in \vec Y]\Rightarrow[(g_1\rightarrow g_2)\otimes 1]\in\vec Y.
\end{equation}
Because $l_1=path(g_1,g_2)$, we have $l_1=path(g_1,g_2)=(g_1\rightarrow g_2)\otimes 1$. If we re-write 
$(g_1\rightarrow g_2)\otimes_{l_1}1$ (where the label $l_1$ indicates the specific choice of path) as $g_1\otimes_{l_1} g_2$, we have $g_1\otimes_{l_1} g_2\in\vec Y$. As an alternative proof, we may consider the collection of (infinite) points from $g_1$ to $g_2$ as a collection of trivial curves in $\vec Y$: $\left\{g_1,g_{1n},g_{1(n+1)},...,g_{2}\right\}\otimes 1$. They constitute the path $l_1\in\vec Y$, and they are already in $\vec Y$ by definition.
\end{proof}

This opening toward tensor products helps the introduction of braided categories. When one starts with a couple of gestures $g_1$ and $g_2$, and a path $p_1$ brings the first into the second, and a second path $p_2$ does the opposite, one can describe the situation as $g_1\otimes g_2\rightarrow g_2\otimes g_1$, with the two paths being $p_1:g_1\rightarrow g_2$ and $p_2:g_2\rightarrow g_1$, that compose the braid and are in $\vec Y$.

In summary, in $\vec X$ the gestures $g_1$ and $g_2$ are curves (1-dimension), and the connecting path is a surface (2-dimension) embedded in a tridimensional space in a first approximation;
in $\vec Y$, $g_1$ and $g_2$ are points (0-dimension), and their connecting path is a curve (1-dimension). Moving toward higher levels in the hierarchy, the dimension decreases. In music, the space $\vec X$ can be the space of hands' movements, and $\vec Y$, the space of arms' movements, for example.
Finally, the topic of cohomology discussed in \citeA{global} can be reviewed in this context of tensor (monoidal) categories. 

\subsection{Density Matrices for Gestures?}

Because quantum mechanics can also be described in terms of monoidal categories \cite{baez_cat}, especially in reference to qubits \cite{qubit}, one may describe the transition from some gestural states (as points in the spacetime) to other gestural states in terms of probability matrices. For example, some gestural curves may be more suitable than others, and then their probability is expected to be higher. Matrices would be useful in gestural similarity comparisons, both quantifying the gestures themselves, as well as their final spectral results (as separated matrices).

The general idea is that a matrix can characterize a gestural configuration, or the probability transition from a set of parameters to another. The matrix formalism can simplify the comparisons between gestures of different artistic codes.
In the literature, there are matrices to describe transitions between homometric states \cite{amiot}.

Let us consider, as a minimal gesture, two points connected by an arrow. One can simply think of the two points as the two energy levels of a 2-level atom, and the arrow between them, as a process in the atom. A probability matrix would help to describe the likelihood of such a process to happen -- for example, with the transition from up to down spin state.

Criteria to quantify memory in quantum systems have been defined \cite{mannone_physics}. Thus, following the analogies between quantum mechanical matrices and musical matrices, criteria to quantify gestural similarity can be developed. There have been attempts to adapt the formalism of non-Markovianity (the amount of memory in quantum systems) to music \cite{non_markov}; such a gestural extension would be an ideal development and improvement of that former work.
 
Another reference to quantum mechanics can be given by comparisons of 2-level atoms with the generalized intervals of Lewin's theory \cite{lewin}, with the general definition of distance between an element and another. One more reference is the concept itself of interval in physics, involving space and time: it can be compared with a multi-parameter comprehensive notation of a musical interval.

Another connection is given by Dirac notation applied to networks, as described in \citeA{kevin} -- where the problem of memory and neural networks is also discussed. Category theory has been applied to simple qubit states \cite{cat_physics}, and one can easily envisage further research developments involving category theory and network theory applied to music, using Dirac notation for musical generalized intervals.  I will deal with networks in Section \ref{network_par}. Finally, further developments of the mathematical theory of musical gestures should also involve the relations between hand movements, magnetic field variation and sound production in theremin playing. The theremin is already an object of scientific research \cite{theremin}, and such a new study would connect once more physics, category theory and art. 

 \section{Functorial approach to Networks}\label{network_par}

The use of braids and knots can be extended to networks of musical gestures. Networks and categories for music are already a topic of research, with the transformation of K-nets within a categorical framework \cite{popoff1}.
Once a musical network is defined, one can define another network, for example a visual one \cite{networks}. A network of images contains a collection of images, one transformed into another, as proposed by \citeA{leonidas} and \citeA{tulsiani}. Musical gestures can be compared with gestures that generate visual artworks via gestural similarity techniques \cite{gest_sim}. What can be done for single, isolated gestures, can be transferred to structures of gestures such as networks \cite{networks}. For this reason, one can connect musical networks with visual networks via functors. In the formalism of 2-categories, one can transform a functor $F_1$ into a functor $F_2$ via a natural transformation. One is thus defining networks of gestures, and hypergestures of networks. Fig. \ref{alpha_2} summarizes these concepts. 
\begin{figure}
\centerline{
\includegraphics[width=7cm]{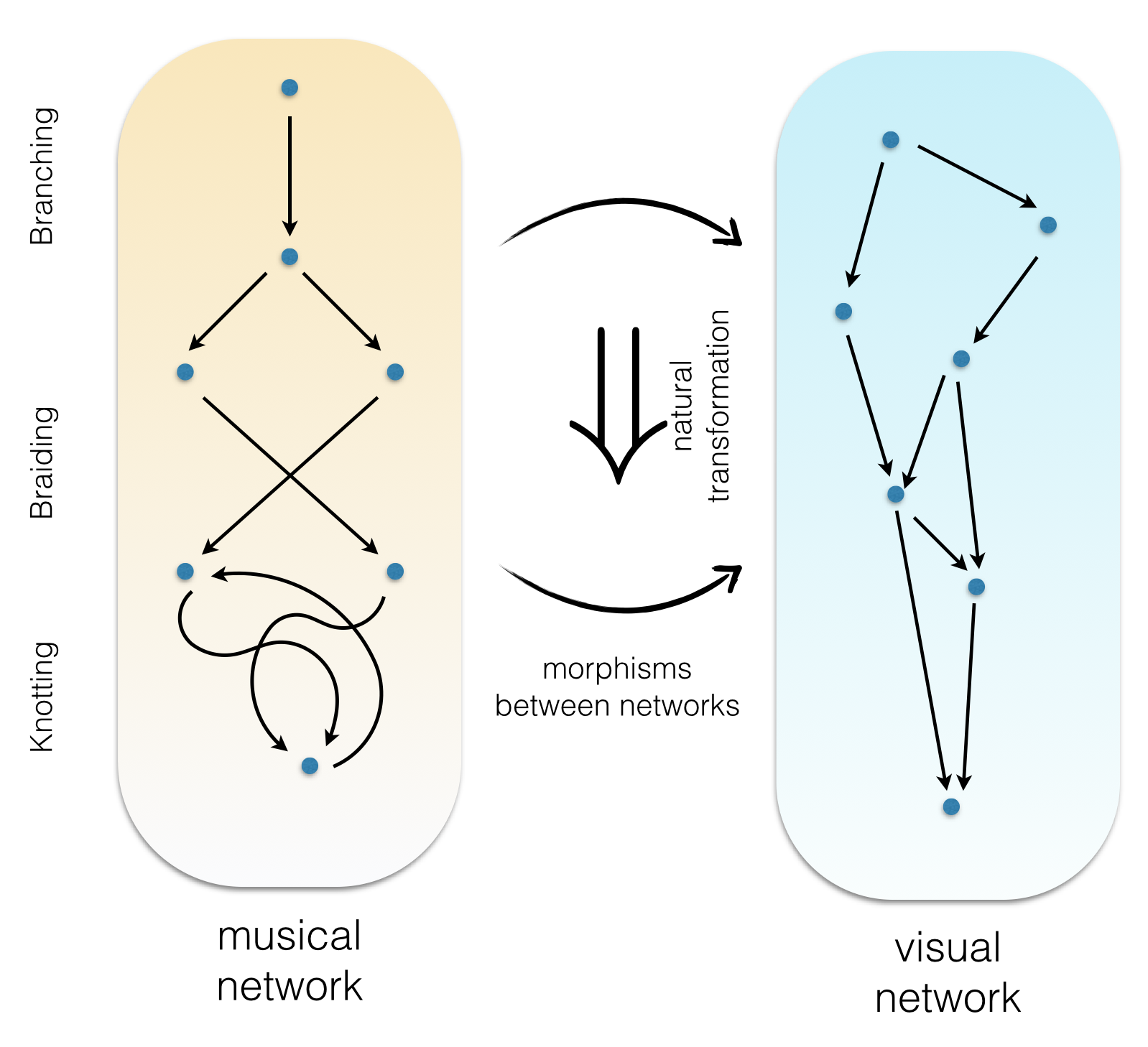}}
\caption{\label{alpha_2} \footnotesize A network mapped onto another. One can map musical networks onto visuals, and vice versa. Here, the structure of each of them is arbitrary. The network on the left shows branching, braiding and knots.}
\end{figure}

One can connect objects inside a network via morphisms, but one can also connect networks to other networks and transform morphisms of a network into morphisms of another network.\footnote{One can define functors between networks, that map objects of a network onto objects of another network, and arrows of a network onto arrows of the other network, and preserve the composition of morphisms and identity morphisms.} Fig. \ref{alpha_3} shows the transformations $\alpha,\,\beta,\,\gamma$ between morphisms in two different networks. This is another way to approach the problem of gestural similarity, inside and between networks.
Thus, one can also have knots, branching and braiding in networks: some structures that one finds in gestures and hypergestures may be replicated on a larger scale, as happens with fractals. Here, I will not deal with fractals, but with other interdisciplinary applications.

The scope of this section is not concerned with unfolding details, but with envisaging some guidelines to create a general, abstract model, one that may help modeling complex musical systems using simple ideas.

In summary, I have shown how to generalize the mathematical theory of gestures, including elements of knot theory and monoidal braided categories. One can find these elements inside musical networks. One can connect musical networks with networks of visual arts, for example, and the connections between these networks can be investigated via category theory. Future developments may also involve gestural K-nets, and all combinations of these elements. These ideas may be useful not only for analytical purposes, but also for creative ones. In the following section, I discuss some possible ways to translate into music the structure of DNA, topoisomerase and knotted proteins, starting from a simplification of their shape, and of course using their mathematics.

\begin{figure}
\centerline{
\includegraphics[width=5cm]{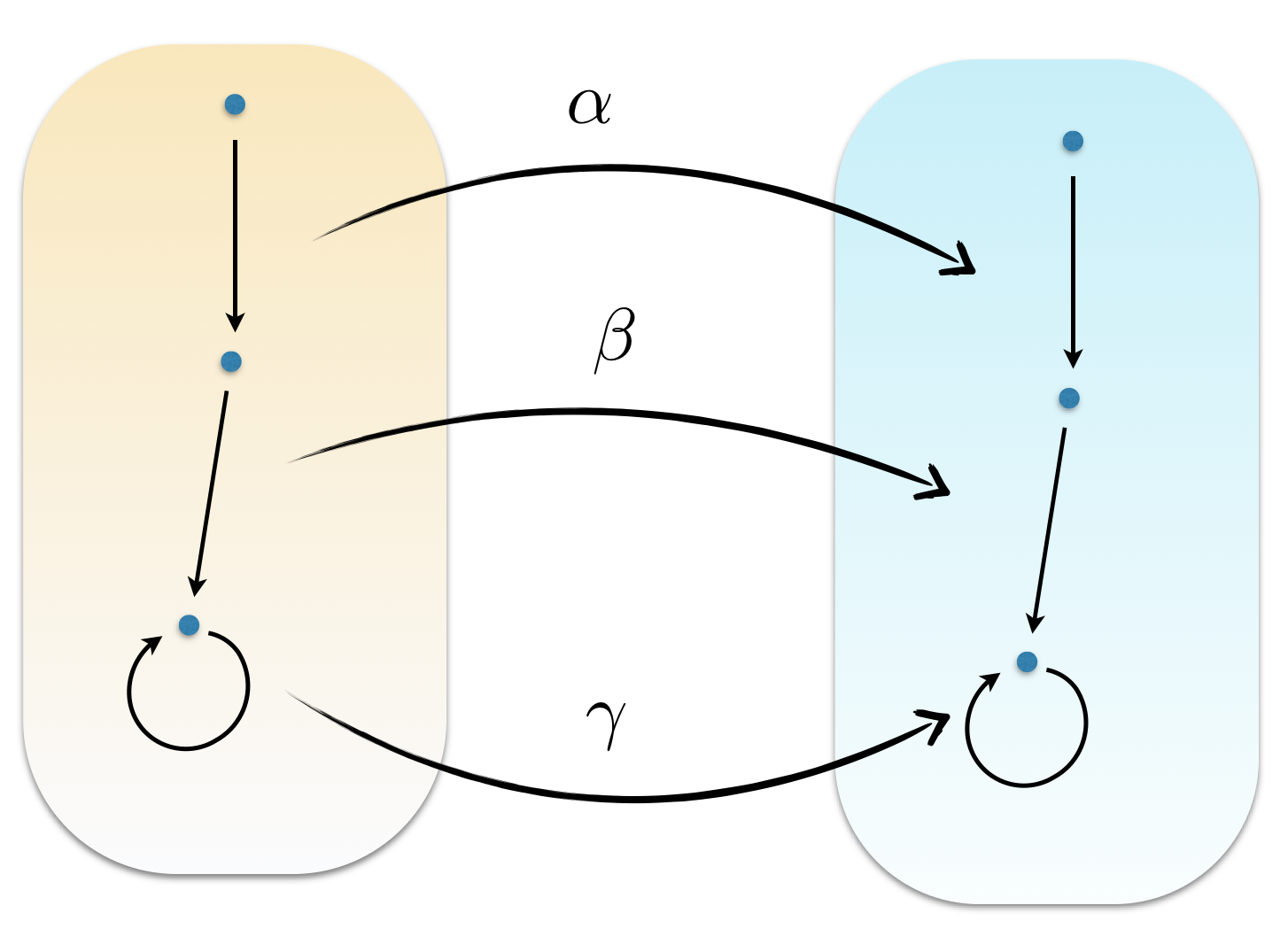}}
\caption{\label{alpha_3} Morphisms between the arrows of two separated networks. I can also define morphisms from $\alpha$ to $\beta$, from $\beta$ to $\gamma$, and their composition.}
\end{figure}

\section{From DNA to Music}\label{DNA_Section}

Topics in biology such as DNA and knotted proteins are also the object of mathematical studies \cite{top_DNA, Austin}. In fact, there are topological problems linked with proteins and enzymes, such as the topoisomerase. By investigating first the topological meaning of actions and reactions of proteins, and seeing them in  light of gesture theory and generalized interval theory, it is possible to envisage a musical description of these biological topics. This would also give a natural, philosophical meaning and foundation to some compositional strategies, highlighting the link between nature and music.

We can also imagine DNA as a small category, and music as another category; the sonification of DNA is made possible via a functor translating data and strings of bases into musical gestures. I will give later some detail about a possible way to realize such a project.

In previous work related to DNA, the technique implied different chords for combinations of proteins, and notes associated with protein sequences \cite{DNA_former2}.
Here, I want to see the double-helix structure resulting from drawing gestures, using analogies between sound and visuals. Fig. \ref{flow} shows the technique I followed to write a piece. This piece, titled {\em DNA}, has the following instrumentation: chimes, glockenspiel, celesta, harp, string quartet and double bass.
The bases A, C, T and G are rendered with different intervals (A as a minor third A-C, C as a major third C-E, T as a minor third B-D following the ``T'' name used in American solfeggio for B, and G as a major third G-B) and played by the percussionists. The connections between bases are represented by harp glissandi.

 \begin{figure}
 \centerline{
\includegraphics[width=10cm]{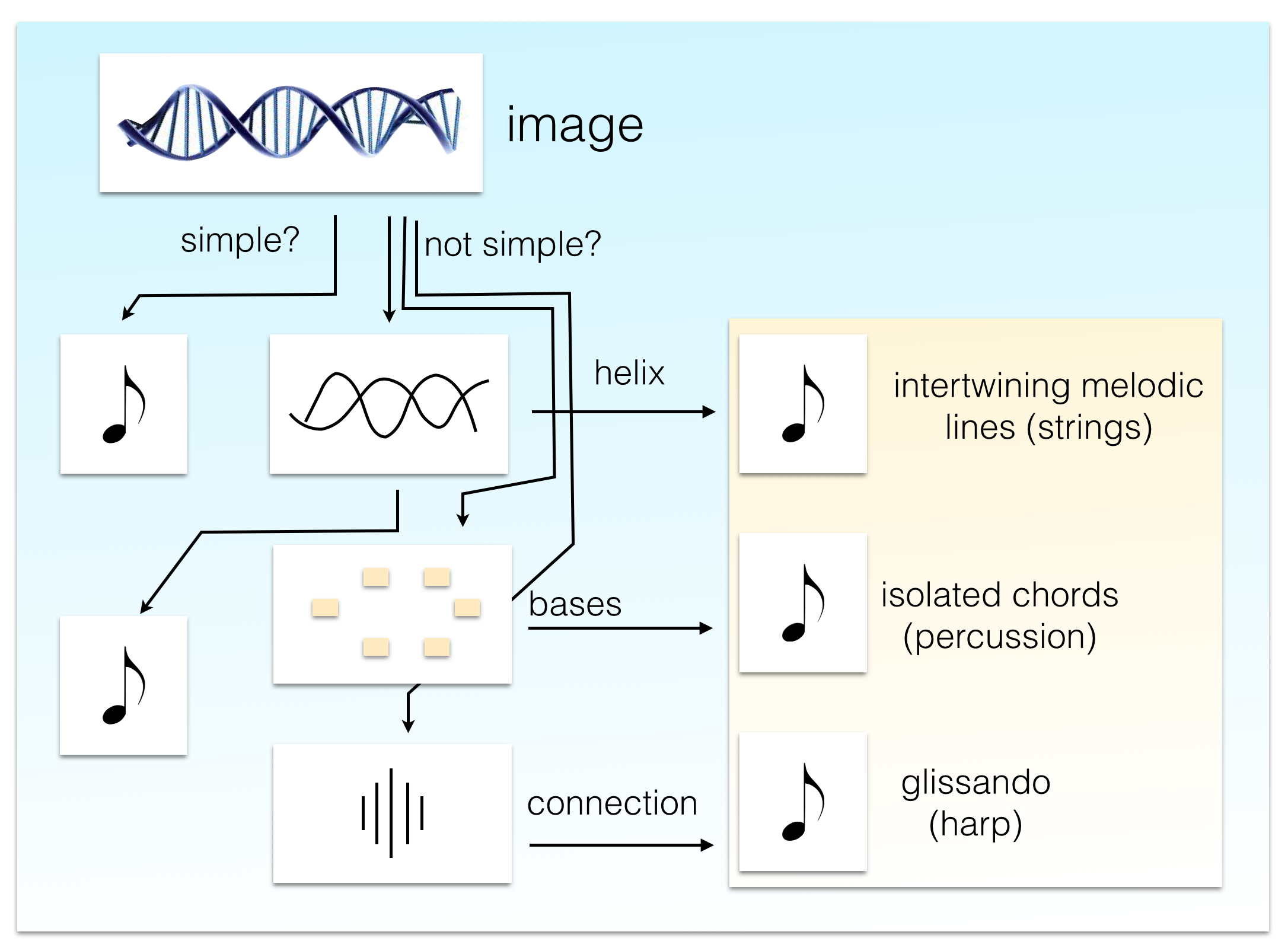}}
\caption{\label{flow} \footnotesize A flow diagram showing the technique behind the piece {\em DNA}. The piece contains the simultaneous musical rendition of three main elements the image was simplified into: the general shape of the double helix, the bases and their connection.}
\end{figure}

%A C\	 C A G	\ A G
%T G\ 	G T A \	T C
%
%
%La Do	\ Do La Sol	\	La Sol
%Si Sol	\ Sol Si La\		Si Do

How can one render the double helix musically?
I have two melodic lines, mutually exchanging and intertwining. The notes, given to strings, are found as sampling of sinusoids, then approximated, rescaled and transformed into musical notes, all via {\em Mathematica}$^{TM}$ software. The exchanging points are musical unisons.
The melodic-line structures are meant to create, in the mind of the listener, an intertwining structure, with some highlighted points (the bases).
Cases with 3-and 4-helix DNA are also taken into account later in the piece. %The original pair of melodies for the double helix is modified increasing duration values (augmentation) and transposed, while being played together with the variations,  to create an effect of counterpoint.
For this reason, the melodic lines representing the double helix were modified: their duration values were increased (augmentation) and their pitches transposed. The original and modified melodic lines are then played together to represent a 3- and 4-helix DNA, creating an effect of counterpoint.
Fig. \ref{DNA_sample} shows a fragment from the score.

A more detailed description would involve knotted protein \cite{bio2, bio3} and topoisomerase \cite{wang}.
The topological problems involved in the topoisomerase are supercoiling, knotting and concatenation.
A possible musical rendition of knotting (in DNA) has been described above. Concatenation can be musically rendered with superposed sequences, time reduction and deformation of the patterns to obtain more superposition. This may be compared with counterpoint transformations, voice leading and voice exchange procedures. 

Fig. \ref{coiling_1} shows a possible musical rendition of coiling. The repeated unison notes correspond to the superposition points in the diagram. One can compare this topic with studies about topological transformations of musical motives. One can also see the crossing as the result of a gestural operator on the musical pattern.

In chromosomes, there is supercoiling of coiled DNA structures.
With a slight abuse of notation, one can compare coiling and supercoiling with gesture and hypergesture.
I propose another musical rendition of coiling with a melody returning periodically on the same note, and, for supercoiling, one can think of several melodies with the same structure, that are repeated in repeated dynamic patterns: from {\em piano} to {\em mezzoforte} to {\em fortissimo} to {\em mezzoforte} to {\em piano} again, and so on. Thus, I use pitch to characterize coiling, and loudness to characterize supercoiling; see Fig. \ref{coiling_2}. Conceptually, it is a gesture in pitch, inside a larger gesture in loudness. One can compare all these steps, and the use itself of elementary gestures between two points, as an extension of Lewin's generalized intervals \cite{lewin}, in a tridimensional space environment. 

In summary, the topological meaning of action and reaction of proteins can be the object of musical composition via gestural concepts. This opens the way to a broader discussion on how nature can influence composition, and how artistic strategies give us a more direct way to understand nature.

 \begin{figure}
 \centerline{
\includegraphics[width=10cm]{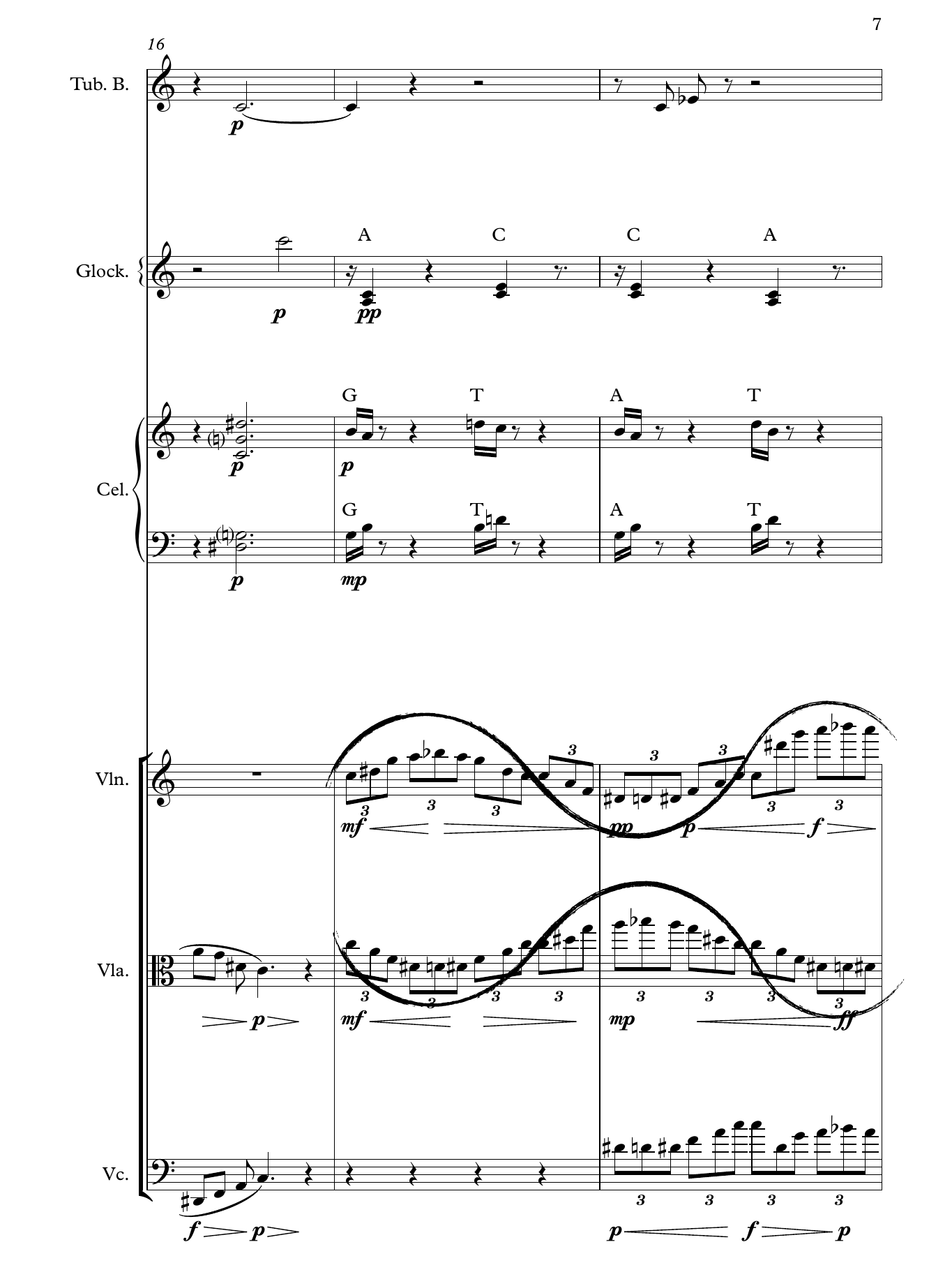}}
\caption{\label{DNA_sample} \footnotesize An extract from {\em DNA}. The curved line indicates the double helix, which then becomes a triple helix, and later a quadruple helix. The isolated intervals indicate the bases A, C, G and T of DNA, as described in the text.}
\end{figure}

 \begin{figure}
 \centerline{
\includegraphics[width=7cm]{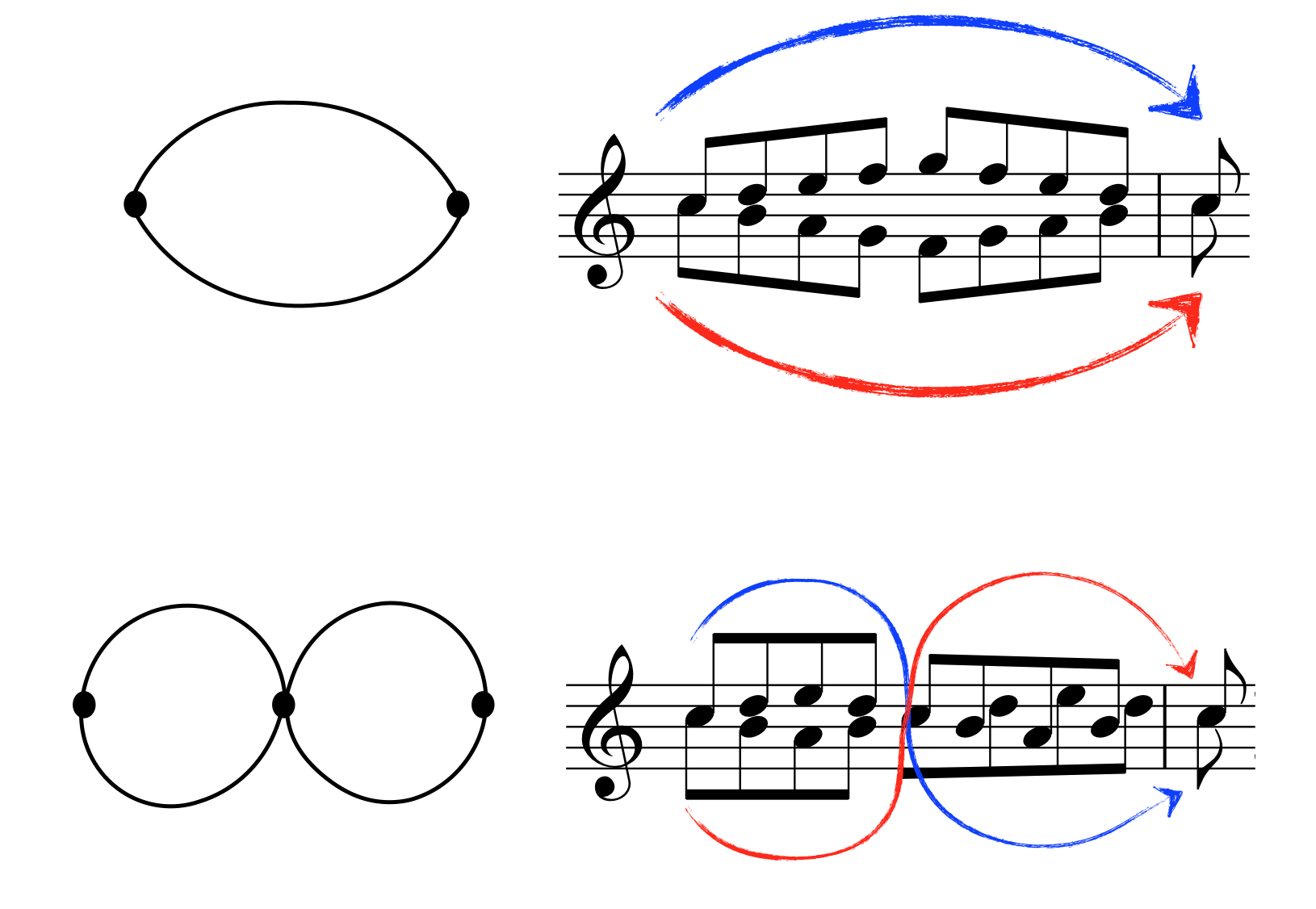}}
\caption{\label{coiling_1} \footnotesize A possibile musical rendition of coiling.}
\end{figure}

 \begin{figure}
 \centerline{
\includegraphics[width=12cm]{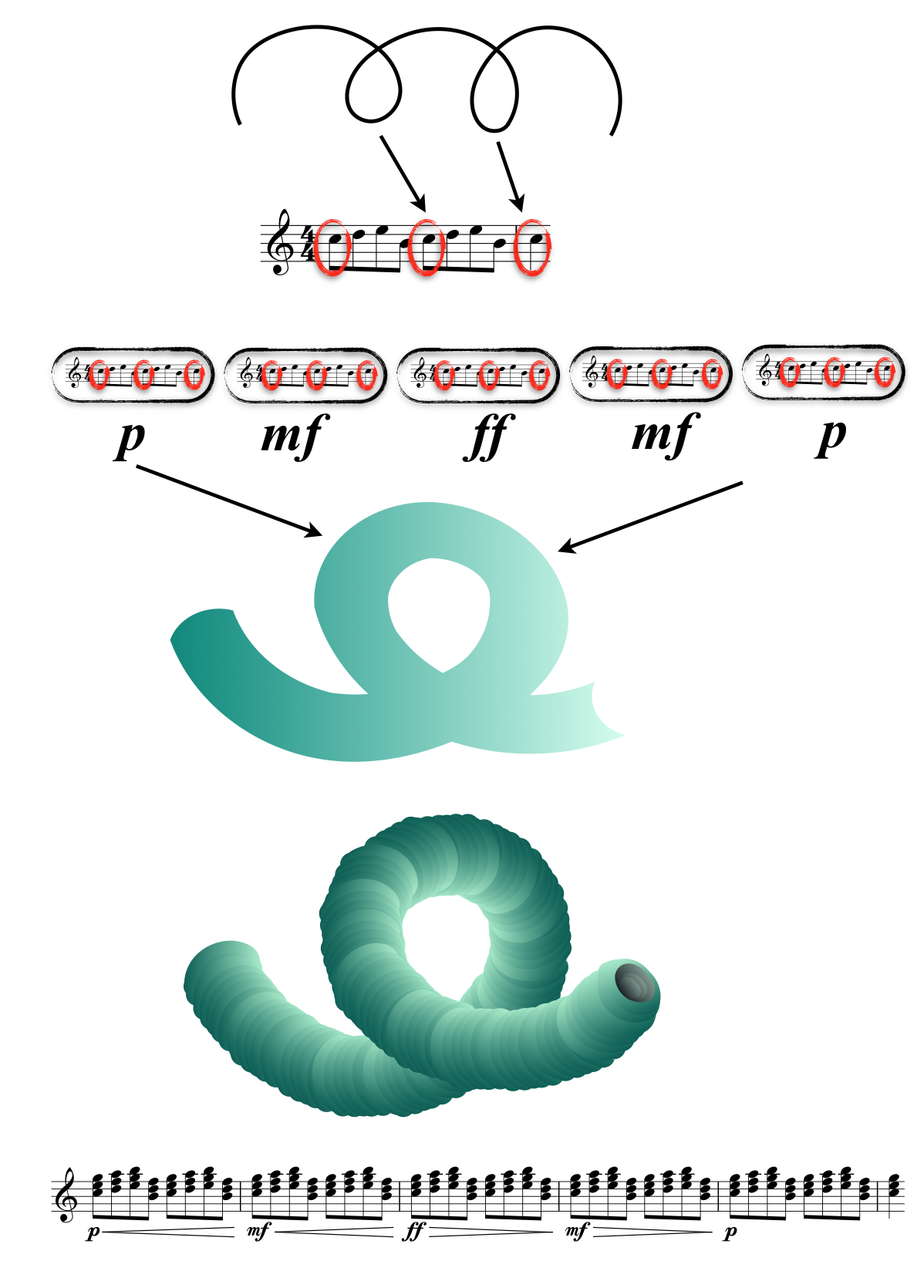}}
\caption{\label{coiling_2} \footnotesize A possibile musical rendition of supercoiling.}
\end{figure}

\section{Conclusion}\label{Conclusion}

In this article I started from recent work on the mathematical theory of musical gestures, taking into account new contributions and open questions, and trying to build up the basis for a more general and coherent model. The results of several previous studies are extended via knot theory and braided monoidal category theory. All the old and new elements are included in the definition of musical networks and their interaction with visual networks.

Due to the interdisciplinary nature of studies about knot theory and braided categories, I ended the text with a description of a new musical composition based on the DNA structure. The examples discussed stress the relationship between musical structures, mathematics and nature, and this can potentially open the way to new connections and mutual exchanges between art, mathematics and biology.

The main goal is not only about showing particular, specific case studies; I have also attempted here to extend mathematical thinking to other fields, and to analyse compositional strategies. In fact, I aim not only to develop a {\em descriptive} theoretical model but also a {\em prescriptive} model, in order to create new artworks. New analytical strategies also constitute a powerful tool to enhance musical creativity: once a method is envisaged, there are in principle endless applications and variations. Moreover, the identification of steps in flow diagrams allows composers to reduce the complexity of the construction of a score, breaking it down into more simple elements. For this reason, the approach to the composition of complex music may be easier, faster and more rationally based. The accomplished composer, as well as the student composer, may find within this theory helpful guidelines to build the structure of musical pieces, and creativity can be liberated within this flexible and creative structure.

Finally, further research might address the application of the discussed model to more analytical cases. Computer programs to analyse networks, quantifying the transformations at each step or branch, may be developed for this scope. Artificial intelligence developments may benefit from a mathematical and in particular categorical framework, as well as from artistic correlations supported by cognitive studies.
 
%\section{Acknowledgments}

%\section*{Acknowledgments}

%\newpage
 
%\maketitle

\bibliographystyle{apacite}
\bibliography{references}

\end{document}